\def\ds{\displaystyle}
\def\pa{\partial}
\def\Div{\mathrm{div}}
\def\N{\mathbb N}
\def\R{\mathbb R}
\def\T{\mathcal T}
\def\dsig{{\rm d}_{\sigma}}
\def\E{\mathcal E}
\def\Om{\Omega}
\def\Linf{L^{\infty}}
\begin{document}

\title*{$L^\infty$ bounds for numerical solutions of noncoercive convection-diffusion equations}
\titlerunning{$L^\infty$ bounds for convection-diffusion equations}
\author{Claire Chainais-Hillairet and Maxime Herda}

\institute{
Claire Chainais-Hillairet  \at  Univ. Lille, CNRS,UMR 8524, Inria-Laboratoire Paul Painlev\'e. F-59000 Lille, France. \\ \email{claire.chainais@univ-lille.fr, }
\and
Maxime Herda
\at Inria, Univ. Lille, CNRS, UMR 8524 - Laboratoire Paul Painlev\'e, F-59000 Lille, France.\\ \email{maxime.herda@inria.fr}
}
\maketitle

\abstract{In this work, we apply an iterative energy method \emph{\`a la} de Giorgi in order to establish $L^{\infty}$ bounds for numerical solutions of noncoercive convection-diffusion equations with mixed Dirichlet-Neumann boundary conditions.}

\keywords{finite volume schemes, uniform bounds, noncoercive elliptic equations \\[1pt]
{\bf MSC }(2010){\bf:} 65M08, 35B40.
}

% ====================================
\section{Introduction}

{\bf The continuous problem.} Let $\Om$ be an open bounded polygonal domain of $\R^p$ with $p=2$ or $3$. We denote by  ${\rm m(\cdot)}$ both the Lebesgue and $p-1$ dimensional Hausdorff measure. We assume that $\pa \Omega= \Gamma^D\cup\Gamma^N$ with $\Gamma^D\cap\Gamma^N=\emptyset$ and ${\rm m}(\Gamma^D)>0$ and we denote by ${\mathbf n}$ the exterior normal to $\pa \Omega$.  Let ${\mathbf U}\in C({\bar \Om})^2$ be a velocity field, $b\in \Linf(\Omega)$ assumed to be nonnegative, $f\in L^\infty(\Omega)$ a source term and $v^D\in \Linf (\Gamma^D)$ a boundary condition. 

We consider the following convection-diffusion equation with mixed boundary conditions:
\begin{subequations}\label{pb_depart}
\begin{align}
&\Div (-\nabla v + {\mathbf U} v)+ bv=f&&\qquad \mbox{in }\Omega, \label{eq_v}\\
& (-\nabla v + {\mathbf U} v)\cdot {\mathbf n}=0 &&\qquad \mbox{on } \Gamma^N, \label{Neum_bc}\\
&v=v^D&&\qquad \mbox{on } \Gamma^D \label{Dir_bc}.
\end{align}
\end{subequations}
This noncoercive elliptic linear problem has been widely studied by Droniou and coauthors, even with less regularity on the {data}, see for instance \cite{droniou_potan_2002, DG_M2AN_2002, Droniou_jnm_2003,DGH_sinum_2003}. Nevertheless, up to our knowledge, the derivation of explicit $L^\infty$ bounds on numerical solutions has not been done in the literature.

\bigskip

\noindent
{\bf  The numerical scheme.} The mesh of the domain $\Omega$ is denoted by $\cal M=(\T,\E,\cal P)$ and classically given by:
$\mathcal T$, a set of open polygonal { or polyhedral} control volumes; 
$\mathcal E$, a set of edges {or faces}; ${\mathcal P}=(x_K)_{K\in\mathcal T}$ a set of points. {In the following, we also use the denomination ``edge'' for  a face in dimension $3$}. As we deal with a Two-Point Flux Approximation (TPFA) of convection-diffusion equations, we assume that the mesh is admissible in the sense of~\cite{Eymard2000} (Definition 9.1). 

We distinguish in $\E$ the interior edges, $\sigma =K|L$, from the exterior edges:  $\E=\E_{int}\cup {\mathcal E}_{ext}$. Among the exterior edges, we distinguish the edges included in $\Gamma^D$ from the edges included in $\Gamma^N$: ${\mathcal E}_{ext}={\mathcal E}^D\cup {\mathcal E}^N$. For a given control volume $K\in{\mathcal T}$, we define ${\mathcal E}_K$ the set of its edges, which is also split into ${\mathcal E}_K={\mathcal E}_{K,int}\cup{\mathcal E}_{K}^D\cup{\mathcal E}_{K}^N$. For each edge $\sigma\in\E$, we pick one cell in the non empty set $\{K:\sigma\in\E_K\}$ and denote it by $K_\sigma$. In the case of an interior edge $\sigma=K|L$, $K_{\sigma}$ is either $K$ or $L$. 

{ Let ${\rm d}(\cdot,\cdot)$ denote the Euclidean distance.} For all edges $\sigma\in{\mathcal E}$, we set ${\rm d}_{\sigma}={\rm d}(x_K,x_L)$ if $\sigma=K|L\in{\mathcal E}_{int}$ and ${\rm d}_{\sigma}={\rm d}(x_K,\sigma)$ if $\sigma\in{\mathcal E}_{ext}$ with $\sigma\in \E_K$ and  the transmissibility coefficient is defined by $\tau_{\sigma}={\rm m}(\sigma)/{\rm d}_{\sigma}$, for all $\sigma\in{\mathcal E}$. We also denote by ${\mathbf n}_{K,\sigma}$ the normal to $\sigma\in{\mathcal E}_K$ outward $K$.
We assume that the mesh satisfies the regularity constraint:
\begin{equation}\label{reg-mesh} 
\exists \xi >0 \mbox{ such that } {\rm d}(x_K,\sigma)\geq \xi \, {\rm d}_{\sigma},\quad \forall K\in\T, \forall \sigma\in\E_K.
\end{equation}
As a consequence, we obtain that 
\begin{equation}\label{inegvol}
\sum_{\sigma\in\E_K} {\rm m}(\sigma)\dsig\leq \ds\frac{p}{\xi} {\rm m} (K)\quad \forall K\in\T.
\end{equation}
 The size of the mesh is defined by $h=\max\{\mbox{diam }(K)\,:\,K\in\T\}$.
 
 Let us define 
 $$
 \begin{aligned}
 & f_K=\ds\frac{1}{{\rm m}(K)}\int_{K} f, \quad b_K=\ds\frac{1}{{\rm m}(K)}\int_{K} b \quad \forall K\in\T,\\
 &U_{K,\sigma}=\ds\frac{1}{{\rm m}(\sigma)}\int_{\sigma} {\mathbf U}\cdot {\mathbf n}_{K,\sigma},\quad \forall K\in\T,\ \forall\sigma \in {\mathcal E}_K,\\
 & v_\sigma^D=\ds\frac{1}{{\rm m}(\sigma)}\int_\sigma v^D,\quad \forall \sigma\in {\mathcal E}^D.
 \end{aligned}
 $$
Given a Lipschitz-continuous function on $\R$ which satisfies 
\begin{equation}\label{hyp_B}
B(0)=1,\quad\ B(s)>0\quad \mbox{ and }\quad B(s)-B(-s)=-s\quad\forall s\in\R,
\end{equation}
we consider the B-scheme defined by 
\begin{equation}\label{scheme}
\sum_{\sigma\in \E_K} {\mathcal F}_{K,\sigma}+ {\rm m}(K) b_K v_K= {\rm m}(K)f_K, \quad \forall K\in{\mathcal T},
\end{equation}
where the numerical fluxes are defined by 
\begin{equation}\label{numflux}
{\mathcal F}_{K,\sigma}=\left\{
\begin{aligned}
&0,\quad \forall K\in\T,\forall \sigma \in \E_K^N,\\
&\tau_{\sigma} \Bigl(B(-U_{K,\sigma}\dsig)v_K-B(U_{K,\sigma}\dsig)v_{K,\sigma}\Bigl),\quad\forall K\in\T, \forall \sigma\in\E_{K}\setminus \E_K^N,
\end{aligned}
\right.
\end{equation}
with the convention $v_{K,\sigma}=v_L$ if $\sigma =K|L$ and $v_{K,\sigma}=v_\sigma^D$ if $\sigma \in\E_K^D$. 
Let us recall that the upwind scheme corresponds to the case $B(s)=1+s^-$  ($s^-$ is the negative part of $s$, while $s^+$ is its positive part) and the Scharfetter-Gummel scheme to the case 
 $B(s)=s/(e^s-1)$. They both satisfy \eqref{hyp_B}. The centered scheme which corresponds to $B(s)=1-s/2$ does not satisfy the positivity assumption. It can however be used if $|U_{K,\sigma}|\dsig\leq 2$ for all $K\in\T$ and $\sigma \in\E_K$. Thanks to the hypotheses \eqref{hyp_B}, we notice that the numerical fluxes through the interior and Dirichlet boundary edges rewrite
 \begin{equation}\label{numflux2}
 {\mathcal F}_{K,\sigma}= \tau_\sigma  B(|U_{K,\sigma}|\dsig)(v_K-v_{K,\sigma})+ {\rm m}(\sigma) \left(U_{K,\sigma}^+ v_K - U_{K,\sigma}^-v_{K,\sigma}\right) .
 \end{equation}
 
\noindent
{\bf Main result.} The scheme \eqref{scheme}-\eqref{numflux} defines a linear system of equations ${\mathbb M} {\mathbf v}={\mathbf S}$ whose unknown is ${\mathbf v}=(v_K)_{K\in\T}$; It is well-known that ${\mathbb M}$ is an M-matrix, which ensures existence and uniqueness of a solution to the scheme. Moreover, we may notice that, if $v^D$ and $f$ are nonnegative functions, then ${\mathbf S}$ has nonnegative values and therefore $v_K\geq 0$ for all $K\in\T$. Our purpose is now to establish $L^{\infty}$ bounds on ${\mathbf v}$ as stated in Theorem \ref{mainthm}.

\begin{theorem}\label{mainthm}
Assume that ${\mathbf U}\in C({\bar \Om})^2$, $b\in \Linf(\Omega)$ with $b\geq 0$ {\em a.e.}, ${f\in L^\infty(\Omega)}$ and $v^D\in \Linf (\Gamma^D)$. There exists non-negative constants $\overline{M}$ (\emph{resp.} $\underline{M}$) depending only on $\Omega$, $\xi$, the function $B$, $\|{\bf U}\|_{L^\infty}$, $\|f^+\|_{L^\infty}$ and $\|(v^D)^+\|_{L^\infty}$ (\emph{resp.} $\|f^-\|_{L^\infty}$ and $\|(v^D)^-\|_{L^\infty}$) such that the solution ${\mathbf v}$ to the scheme \eqref{scheme}-\eqref{numflux} verifies 
\[
-\underline{M}\ \leq\ v_K\ \leq\ \overline{M}, \quad \forall K\in\T. 
\]
\end{theorem}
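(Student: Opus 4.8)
The plan is to run a \emph{discrete De Giorgi iteration} on the super-level sets of $\mathbf v$. First, by replacing $(v,f,v^D,\mathbf U)$ with $(-v,-f,-v^D,\mathbf U)$ — which again solves a scheme of the form \eqref{scheme}–\eqref{numflux} — the lower bound $-\underline M\le v_K$ is exactly the upper bound for the modified problem, so it suffices to produce $\overline M$. I fix a level $k\ge k_0:=\|(v^D)^+\|_{\Linf}$, set $w_K:=(v_K-k)^+\ge0$, and note that the neighbouring value $w_{K,\sigma}:=(v_{K,\sigma}-k)^+$ vanishes on every Dirichlet edge since $v_\sigma^D\le k$. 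Multiplying \eqref{scheme} by $w_K$, summing over $K\in\T$ and integrating by parts discretely (the Neumann fluxes vanish), the antisymmetry ${\mathcal F}_{L,\sigma}=-{\mathcal F}_{K,\sigma}$ together with \eqref{numflux2} gives an identity $D+T+Z=F$, where $Z=\sum_K{\rm m}(K)b_Kv_Kw_K\ge0$ is discarded, $F=\sum_K{\rm m}(K)f_Kw_K\le\|f^+\|_{\Linf}\sum_K{\rm m}(K)w_K$, while $D$ collects the diffusive contributions $\sum_\sigma\tau_\sigma B(|U_{K,\sigma}|\dsig)(v_K-v_{K,\sigma})(w_K-w_{K,\sigma})$ and $T$ the convective ones. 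The goal is an energy estimate of the form $[w]^2\lesssim (1+k)^2\,g({\rm m}(A_k),\|w\|_{L^1})$, where $[w]^2:=\sum_\sigma\tau_\sigma(w_K-w_{K,\sigma})^2$ and $A_k$ is the union of the cells with $v_K>k$.

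Next comes coercivity. The elementary inequality $(v_K-v_{K,\sigma})(w_K-w_{K,\sigma})\ge(w_K-w_{K,\sigma})^2$ (on Dirichlet edges it uses $v_\sigma^D\le k$) yields $D\ge\sum_\sigma\tau_\sigma B(|U_{K,\sigma}|\dsig)(w_K-w_{K,\sigma})^2$, but $B$ may degenerate as its argument grows (Scharfetter–Gummel), so diffusion alone is \emph{not} uniformly coercive. I recover coercivity from the upwinding: extracting from $T$ the nonnegative part $\tfrac12\sum_\sigma{\rm m}(\sigma)|U_{K,\sigma}|(w_K-w_{K,\sigma})^2$ and adding it to $D$ produces the edge weights $\tau_\sigma\big(B(s_\sigma)+s_\sigma/2\big)$ with $s_\sigma=|U_{K,\sigma}|\dsig$. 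By \eqref{hyp_B} the map $s\mapsto B(s)+s/2$ is even, continuous, positive and tends to $+\infty$, hence $c_0:=\inf_{s\ge0}\big(B(s)+s/2\big)>0$ depends only on $B$. Therefore $D+\tfrac12\sum_\sigma{\rm m}(\sigma)|U_{K,\sigma}|(w_K-w_{K,\sigma})^2\ge c_0\,[w]^2$, a mesh-robust coercive term.

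The main obstacle is to control the convective remainder $R:=T-\tfrac12\sum_\sigma{\rm m}(\sigma)|U_{K,\sigma}|(w_K-w_{K,\sigma})^2$ from below by quantities involving only $w$ and $k$. Writing $v_K=w_K+k$ wherever $v_K>k$, an edge-by-edge inspection shows each contribution is bounded below by $-C\|\mathbf U\|_{\Linf}{\rm m}(\sigma)\big(|w_K-w_{K,\sigma}|(w_K+w_{K,\sigma})+k|w_K-w_{K,\sigma}|\big)$; crucially, the \emph{a priori} unbounded negative values of $v$ always enter with a favourable sign and drop out. The delicate point is to resist symmetrizing into a telescoping sum — which would resurrect the uncontrolled discrete divergence $\sum_{\sigma\in\E_K}{\rm m}(\sigma)U_{K,\sigma}$ (not estimable for merely continuous $\mathbf U$) — and instead to bound $|w_K^2-w_{K,\sigma}^2|=|w_K-w_{K,\sigma}|(w_K+w_{K,\sigma})$ directly, mimicking the continuous estimate $|\int_\Omega\mathbf U\,w\cdot\nabla w|\le\|\mathbf U\|_{\Linf}\|w\|_{L^2}\|\nabla w\|_{L^2}$. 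A Cauchy–Schwarz weighted by $\tau_\sigma$ combined with the mesh regularity \eqref{inegvol} then gives $|R|\le\tfrac{c_0}{2}[w]^2+C\|\mathbf U\|_{\Linf}^2\big(\|w\|_{L^2}^2+k^2\,{\rm m}(A_k)\big)$. Reinserting this, absorbing half of $R$ into the coercive term, and removing the zeroth-order $\|w\|_{L^2}^2$ by a discrete Gagliardo–Nirenberg interpolation followed by Young's inequality, I reach the energy estimate $[w]^2\le C\big(\|f^+\|_{\Linf}\|w\|_{L^1}+\|\mathbf U\|_{\Linf}^2\,k^2\,{\rm m}(A_k)\big)$ for all $k\ge k_0$.

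Finally, I feed this into a discrete Sobolev inequality $\|w\|_{L^q}\le C_S[w]$ for some $q>2$ (valid because ${\rm m}(\Gamma^D)>0$, with $C_S$ depending on $\Omega$, $\xi$, $p$). Choosing levels $k_n=\overline M(1-2^{-n})$ shifted to start above $k_0$ and energies $a_n=\|(v-k_n)^+\|_{L^1}$, the Chebyshev-type bounds ${\rm m}(A_{k_{n+1}})\le(k_{n+1}-k_n)^{-q}\|w_{k_n}\|_{L^q}^q$ and $\|w_{k_{n+1}}\|_{L^1}\le\|w_{k_n}\|_{L^q}\,{\rm m}(A_{k_{n+1}})^{1-1/q}$ convert the energy estimate into a nonlinear recursion $a_{n+1}\le C\,\beta^{\,n}a_n^{1+\alpha}$ with $\alpha>0$ and $\beta>1$ depending only on the admissible data. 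The classical fast-convergence lemma then forces $a_n\to0$ once $a_0$ is small enough, which is arranged by taking $\overline M$ large (explicitly in terms of $\Omega,\xi,B,\|\mathbf U\|_{\Linf},\|f^+\|_{\Linf},\|(v^D)^+\|_{\Linf}$). Hence $(v-\overline M)^+\equiv0$, i.e. $v_K\le\overline M$, and the lower bound follows from the sign reduction of the first step. The entire difficulty is concentrated in the two middle steps: manufacturing a coercive term that is robust in both the mesh and the choice of $B$ out of the convective upwinding, and taming the noncoercive remainder without ever bounding the discrete divergence of $\mathbf U$.
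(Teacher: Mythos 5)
Your overall architecture (reduce to an upper bound, test the scheme with a truncation, derive an energy estimate at each level, run the fast-convergence iteration) is the right family of ideas, and several of your individual observations are correct: the sign reduction $v\mapsto -v$ is a legitimate substitute for the paper's decomposition $\mathbf v=\mathbf P-\mathbf N$; the negative values of $v$ do drop out of the upwind convective term with a favourable sign; and coercivity of the diffusive part is not actually in danger (the argument $|U_{K,\sigma}|\dsig$ is bounded by ${\rm diam}(\Omega)\|\mathbf U\|_{L^\infty}$, which is why the paper can simply use $\beta_{\mathbf U}=\inf B>0$ --- your $\inf_s(B(s)+s/2)$ trick is a valid, if unnecessary, alternative). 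The genuine gap is in the step where you absorb the convective remainder. Testing with $w=(v-k)^+$ itself produces the term $\|\mathbf U\|_{L^\infty}\sum_\sigma {\rm m}(\sigma)|w_K-w_{K,\sigma}|(w_K+w_{K,\sigma})\lesssim \|\mathbf U\|_{L^\infty}\,[w]\,\|w\|_{L^2}$, and after Gagliardo--Nirenberg and Young what survives is not zero but a term of order $\|\mathbf U\|_{L^\infty}^4\|w\|_{L^1}^2$ (your stated final estimate silently drops it). That term is fatal at the \emph{initialization} of the iteration: to start De Giorgi you need an a priori bound on $a_0=\|(v-k_0)^+\|_{L^1}$ in terms of the data, and the only available route --- combining your own energy estimate at $k=k_0$ with the Sobolev inequality $\|w\|_{L^1}\lesssim C_S[w]$ --- yields $[w_{k_0}]^2\lesssim \|f^+\|_{L^\infty}[w_{k_0}]+C\,\|\mathbf U\|_{L^\infty}^4 C_S^2\,[w_{k_0}]^2+\dots$, which closes only under a smallness condition on $\|\mathbf U\|_{L^\infty}$ relative to the Sobolev constant. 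This is precisely the noncoercivity obstruction the theorem is designed to overcome, and no choice of $\overline M$ repairs it, since $a_0$ is attached to the fixed starting level $k_0$, not to $\overline M$.

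The paper's way around this is the one ingredient your proof is missing: instead of testing with $w$, it tests with the \emph{bounded} function $\varphi(w)=w/(1+w)\mathbf 1_{\{w\geq 0\}}$. Because $\varphi$ is bounded by $1$, the convective contribution is controlled by $\|\mathbf U\|_{L^\infty}(F_m)^{1/2}\bigl(\tfrac{p}{\xi}{\rm m}(\{w>0\})\bigr)^{1/2}$ --- the measure of the level set replaces $\|w\|_{L^2}$ --- so Young's inequality absorbs it \emph{unconditionally} into the logarithmic energy $E_m=\sum\tau_\sigma[\log(1+(w_{K,\sigma})^+)-\log(1+(w_K)^+)]^2$, giving $E_m\lesssim {\rm m}(\{v>C_m\})$ with no smallness assumption (Proposition~\ref{prop:fund_ineq}). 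The De Giorgi iteration is then run on $E_m$ between the fixed levels $C_m=2(1-2^{-m})\in[1,2)$ (Proposition~\ref{mainprop}), and the arbitrariness of the data is recovered at the end by the rescaling $\mathbf v\mapsto \mathbf v/M$, choosing $M$ large enough that $E_1(\mathbf v/M)$ meets the smallness threshold. If you want to salvage your draft, the fix is to replace the test function $(v_K-k)^+$ by $\varphi((v_K-k)^+)$ (or another bounded concave truncation) and rebuild the energy accordingly; the rest of your scaffolding can then be kept.
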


The rest of this paper is dedicated to the proof of Theorem~\ref{mainthm}. It relies on a De Giorgi iteration method (see \cite{Vasseur_lectnotes} and references therein). In Section~\ref{sec:particular}, we start by studying a particular case where the data is normalized. Then, we give the proof of the theorem in Section~\ref{sec:proof}.

Let us mention that from the bounds of Theorem~\ref{mainthm}, it is possible to establish global-in-time $L^\infty$ bounds for the corresponding evolution equation by using an entropy method (see \cite[Theorem 2.7]{chainais_2019_large}).
 
\section{Study of a particular case}\label{sec:particular}

In this section, we consider the particular case where the source $f$ is non-negative and the boundary condition $v^D$ is non-negative and bounded by $1$.

Let us start with some notations. Given $m\geq 1$, we denote the $m$-th truncation threshold by  \begin{equation}\label{eq:trunc}
   C_m=2(1-2^{-m})\,,                                                                                            \end{equation}
Then, we introduce the $m$-th energy
\begin{equation}\label{eq:energy}
E_m({\bf v})=\ds\sum_{\sigma\in\E_{int}\cup\E^D} \tau_\sigma \left[\log (1 +(v_{K,\sigma}-C_m)^+)-\log (1 +(v_{K}-C_m)^+)\right]^2.
\end{equation} 
When there is no ambiguity we write $E_m = E_m({\bf v})$. The first proposition is a fundamental estimate of the energy.

\begin{proposition}\label{prop:fund_ineq}
 Assume that $f_K\geq 0$ for all $K\in\T$ and $v_\sigma^D \in [0,1]$ for all $\sigma \in \E^D$, so that the solution ${\mathbf v}$ to \eqref{scheme}-\eqref{numflux} satisfies $v_K\geq 0$ for all $K\in\T$. Then one has for all $m\geq1$ that
\begin{equation}\label{majEm}
E_m\ \leq\ \frac{4p}{\beta_{\mathbf U}^2}\left(\Vert {\mathbf U}\Vert_{L^\infty}^2+{\Vert f\Vert_{L^\infty}}\right) \sum_{\substack{K\in\T\\v_{K}>C_m}}{{\rm m}(K)}\,.
\end{equation}
where $\beta_{\mathbf U} := \inf_{x\in[-\|\mathbf{U}\|_{L^\infty},\|\mathbf{U}\|_{L^\infty}]}B({\rm diam}(\Omega)\,x)$  (because of \eqref{hyp_B}, $\beta_{\mathbf U}\in(0,1]$).
\end{proposition}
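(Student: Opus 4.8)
The plan is to run a discrete De Giorgi estimate obtained by testing the scheme against a bounded, nondecreasing function of $v_K$ that vanishes below the threshold $C_m$. I set
\[
g(s)=1-\frac{1}{1+(s-C_m)^+},\qquad L(s)=\log\!\big(1+(s-C_m)^+\big),
\]
so that $0\le g<1$, both $g$ and $L$ vanish on $\{s\le C_m\}$, and --- since $C_m=2(1-2^{-m})\ge1$ while $v_\sigma^D\in[0,1]$ --- one has $g(v_\sigma^D)=L(v_\sigma^D)=0$ on every Dirichlet edge. First I would multiply \eqref{scheme} by $g(v_K)\ge0$ and sum over $K\in\T$. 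Because $b_K\ge0$, $v_K\ge0$ and $g(v_K)\ge0$, the zeroth-order contribution is nonnegative and can be dropped; using conservativity of the fluxes, the vanishing of the Neumann fluxes and $g(v_\sigma^D)=0$, a discrete integration by parts yields
\[
\sum_{\sigma\in\E_{int}\cup\E^D}\mathcal F_{K,\sigma}\big(g(v_K)-g(v_{K,\sigma})\big)\ \le\ \sum_{K\in\T}{\rm m}(K)\,f_K\,g(v_K).
\]
The right-hand side is at most $\|f\|_{L^\infty}\sum_{v_K>C_m}{\rm m}(K)$, since $0\le f_Kg(v_K)\le\|f\|_{L^\infty}$ and $g(v_K)$ is supported on $\{v_K>C_m\}$.

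Next I would split $\mathcal F_{K,\sigma}$ according to \eqref{numflux2} into a diffusive and a convective part. For the diffusive part, $B(|U_{K,\sigma}|\dsig)\ge\beta_{\mathbf U}$ and every summand is nonnegative, so it suffices to establish the edgewise bound $(v_K-v_{K,\sigma})(g(v_K)-g(v_{K,\sigma}))\ge(L(v_K)-L(v_{K,\sigma}))^2$. Writing $a=1+(v_K-C_m)^+$ and $b=1+(v_{K,\sigma}-C_m)^+$ (both $\ge1$), this reduces to $\tfrac{(a-b)^2}{ab}\ge(\log a-\log b)^2$, which is the elementary inequality $u-u^{-1}\ge2\log u$ for $u\ge1$. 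Hence the diffusive part is bounded below by $\beta_{\mathbf U}E_m$.

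The convective part $\sum_\sigma{\rm m}(\sigma)\big(U_{K,\sigma}^+v_K-U_{K,\sigma}^-v_{K,\sigma}\big)\big(g(v_K)-g(v_{K,\sigma})\big)$ is the noncoercive term, and controlling it is the crux of the proof: the pointwise values $v_K$ are a priori unbounded, so a naive Young's inequality fails. The device I would use is a discrete chain rule. Introducing $H(s)=\int_0^s t\,g'(t)\,dt=L(s)+(C_m-1)g(s)$, monotonicity of the integrand gives, for every edge,
\[
v_{K,\sigma}\big(g(v_K)-g(v_{K,\sigma})\big)\ \le\ H(v_K)-H(v_{K,\sigma})\ \le\ v_K\big(g(v_K)-g(v_{K,\sigma})\big).
\]
Distinguishing the two upwind cases $U_{K,\sigma}\ge0$ and $U_{K,\sigma}<0$ and inserting the appropriate inequality edge by edge, the convective part is bounded below by $\sum_\sigma{\rm m}(\sigma)\,U_{K,\sigma}\big(H(v_K)-H(v_{K,\sigma})\big)$. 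The crucial gain is that the unbounded factor $v_K$ has been traded for the logarithmically controlled increment $H(v_K)-H(v_{K,\sigma})$, which satisfies $|H(v_K)-H(v_{K,\sigma})|\le C_m\,|L(v_K)-L(v_{K,\sigma})|$.

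It then remains to absorb this lower bound. By Young's inequality, pairing $\tau_\sigma^{1/2}|L(v_K)-L(v_{K,\sigma})|$ with $C_m\|\mathbf U\|_{L^\infty}({\rm m}(\sigma)\dsig)^{1/2}$ and using ${\rm m}(\sigma)^2/\tau_\sigma={\rm m}(\sigma)\dsig$, I would bound the convective contribution from below by $-\tfrac{\beta_{\mathbf U}}{2}E_m-\tfrac{C_m^2\|\mathbf U\|_{L^\infty}^2}{2\beta_{\mathbf U}}\sum_\sigma{\rm m}(\sigma)\dsig$, where the geometric sum effectively runs only over edges with $\max(v_K,v_{K,\sigma})>C_m$ because $L(v_K)-L(v_{K,\sigma})$ vanishes otherwise. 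Charging each such edge to an adjacent cell where $v>C_m$ and invoking \eqref{inegvol} controls this sum by a multiple of $\sum_{v_K>C_m}{\rm m}(K)$. Collecting the diffusive lower bound $\beta_{\mathbf U}E_m$, the convective estimate and the source bound, the terms $\tfrac{\beta_{\mathbf U}}{2}E_m$ cancel and, after dividing by $\beta_{\mathbf U}/2$ and using $C_m\le2$ and $\beta_{\mathbf U}\le1$, one obtains \eqref{majEm} up to the bookkeeping of the numerical constants. The hard part is precisely the convective estimate: making the discrete chain rule rigorous and combining it with upwinding is what neutralizes the noncoercivity and keeps the final bound dependent on $\|\mathbf U\|_{L^\infty}$ alone.
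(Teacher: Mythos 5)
Your argument is correct and follows the same skeleton as the paper's proof: the test function is identical (your $g(v_K)$ is exactly $\varphi(v_K-C_m)$ with $\varphi(s)=\tfrac{s}{1+s}{\mathbf 1}_{\{s\ge 0\}}$, which is what the paper multiplies the scheme by), the zeroth-order term is dropped by sign, the flux is split via \eqref{numflux2}, the diffusive part yields $\beta_{\mathbf U}E_m$ through the same elementary log inequality, and the geometric sum over active edges is charged to cells with $v_K>C_m$ via \eqref{inegvol}. Where you genuinely diverge is the crux, the convective term. The paper exploits the symmetry of interior edges to reorder so that $w_K^m\ge w_{K,\sigma}^m$, observes that only the upwind value attached to the \emph{smaller} state survives with the dangerous sign, and uses the factorization $\varphi(x)-\varphi(y)=\tfrac{x^+-y^+}{(1+x^+)(1+y^+)}$ so that the potentially large factor cancels, finishing with a Cauchy--Schwarz estimate against the intermediate quantity $F_m$. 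You instead introduce $H(s)=\int_0^s t\,g'(t)\,dt=L(s)+(C_m-1)g(s)$ and the two-sided discrete chain rule $v_{K,\sigma}\bigl(g(v_K)-g(v_{K,\sigma})\bigr)\le H(v_K)-H(v_{K,\sigma})\le v_K\bigl(g(v_K)-g(v_{K,\sigma})\bigr)$, which is compatible with the upwinding for either sign of $U_{K,\sigma}$ and trades the unbounded factor for the increment of $H$, controlled by $C_m\lvert L(v_K)-L(v_{K,\sigma})\rvert$; Young's inequality then absorbs half of $\beta_{\mathbf U}E_m$. Both devices neutralize the noncoercivity; yours is the more systematic entropy-flux treatment of upwinded convection and avoids the WLOG edge reordering, while the paper's is shorter and tailored to the specific $\varphi$. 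One shared bookkeeping point: like the paper's own derivation (see \eqref{ineg3}), your final constant carries a factor $p/\xi$ rather than $p$ in front of $\Vert\mathbf U\Vert_{L^\infty}^2$, so the absence of $\xi$ in \eqref{majEm} as stated is a constant-tracking slip of the paper rather than a defect of your proof, and it is harmless since all subsequent uses allow dependence on $\xi$.
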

\begin{proof}
  In order to shorten some expressions hereafter, let us introduce $w_K^m = v_K-C_m$ for all $K\in \T$ and $w_\sigma^{m,D}=v_{\sigma}^D-C_m$ for all $\sigma \in\E^D$. Let us note that we identify ${\mathbf w}^m=(w_K^m)_{K\in\T}$ and the associate piecewise constant function. Therefore, we can write 
 $$
  {\rm m} (\{{\mathbf w}^m>0\}) = \sum_{w_{K}^m>0}{{\rm m}(K)}.
  $$
  
 First, observe that $E_m$ is the discrete counterpart of 
$$
\int_\Omega \left\vert \nabla \log (1+w^m)\right\vert^2 {\mathbf 1}_{\{w^m>0\}}=\int_\Omega \nabla w^m\cdot\frac{\nabla w^m}{(1+w^m)^2}{\mathbf 1}_{\{w^m>0\}},\ 
\mbox{ with }w^m=v-C_m\,,
$$
 {where ${\mathbf 1}_{A}$ is the indicator function of $A$}. Let us define $\varphi :s\mapsto s/(1+s){\mathbf 1}_{\{s\geq 0\}}$, which satisfies  $\varphi'(s)=1/(1+s)^2{\mathbf 1}_{\{s\geq 0\}}$ and let us introduce $F_m$ another  discrete counterpart of the preceding quantity
$$
F_m=\ds\sum_{\sigma\in\E_{int}\cup\E^D} \tau_\sigma \left((w_{K,\sigma}^m)^+-(w_{K}^m)^+\right)\left(\varphi(w_{K,\sigma}^m)-\varphi(w_{K}^m) \right). 
$$
It is clear that $E_m\leq F_m$ for all $m\geq 1$, as for all $x,y\in\R$ we have
$$
\left(\log(1+x^+)-\log(1+y^+)\right)^2\leq (x^+-y^+)\left(\varphi(x)-\varphi(y)\right).
$$

Let us now multiply the scheme \eqref{scheme} by $\varphi(w_K^m)$ and sum over $K\in\T$. Due to the non-negativity of $b$ and ${\mathbf v}$, we obtain, after a discrete integration by parts,
$$
\ds\sum_{\sigma\in\E_{int}\cup\E^D}{\mathcal F}_{K,\sigma}(\varphi(w_K^m)-\varphi(w_{K,\sigma}^m))\leq \sum_{K\in\T} {\rm m}(K) f_K \varphi(w_K^m).
$$ 
Using that $\varphi$ is bounded by 1 and vanishes on $\R_-$, we deduce that 
\begin{equation}\label{inegdep}
\ds\sum_{\sigma\in\E_{int}\cup\E^D}{\mathcal F}_{K,\sigma}(\varphi(w_K^m)-\varphi(w_{K,\sigma}^m))\leq { \Vert f\Vert_{L^\infty}\,
{\rm m}(\{{\mathbf w}^m>0\})}.
\end{equation}

We focus now on the left-hand-side of \eqref{inegdep}.  Due to \eqref{numflux2} and the definition of $w_K^m$, we can rewrite $ {\mathcal F}_{K,\sigma}$ as 
$$
{\mathcal F}_{K,\sigma}= \tau_\sigma  B(|U_{K,\sigma}|\dsig)(w_K^m-w^m_{K,\sigma})+ {\rm m}(\sigma) \left(U_{K,\sigma}^+ (w_K^m+C_m) - U_{K,\sigma}^-(w_{K,\sigma}^m+C_m)\right) .
$$
Observe that since $\varphi$ is a non-decreasing function, one has
$$
(x-y)\left(\varphi(x)-\varphi(y)\right)\geq (x^+-y^+)(\varphi(x)-\varphi(y)),\quad \forall x,y\in\R.
$$
Therefore, using the definition of $\beta_{\mathbf U}$ we obtain that 
\begin{equation}\label{ineg1}
\ds\sum_{\sigma\in\E_{int}\cup\E^D}{\mathcal F}_{K,\sigma}(\varphi(w_K^m)-\varphi(w_{K,\sigma}^m))\geq \beta_{\mathbf U} F_m -G_m,
\end{equation}
with 
$$
G_m=-\sum_{\sigma\in\E_{int}\cup\E^D}{\rm m}(\sigma) \left(U_{K,\sigma}^+ (w_K^m+C_m) - U_{K,\sigma}^-(w_{K,\sigma}^m+C_m)\right) (\varphi(w_K^m)-\varphi(w_{K,\sigma}^m)).
$$
For an interior edge, $w_K^m$ and $w_{K,\sigma}^m$ play a symmetric role in the preceding sum. As $w_{\sigma}^{m,D}\leq 0$ for all $\sigma\in\E^D$ and $\varphi$ vanishes on $\R_-$, we can always assume that $w_K^m\geq w_{K,\sigma}^m$ and an edge has a contribution in the sum if at least $w_K^m> 0$. Then, under these assumptions one has
\begin{multline*}
-{\rm m}(\sigma) \left(U_{K,\sigma}^+ (w_K^m+C_m) - U_{K,\sigma}^-(w_{K,\sigma}^m+C_m)\right) (\varphi(w_K^m)-\varphi(w_{K,\sigma}^m))\\
\leq \Vert {\mathbf U}\Vert_{L^\infty} {\rm m}(\sigma) (w_{K,\sigma}^m+C_m)(\varphi(w_K^m)-\varphi(w_{K,\sigma}^m)).
\end{multline*}
But, $w_{K,\sigma}^m+C_m\leq 2(1+(w_{K,\sigma}^m)^+)$ and applying the definition of $\varphi$, we get
$$
\begin{array}{rcl}
(w_{K,\sigma}^m+C_m)(\varphi(w_K^m)-\varphi(w_{K,\sigma}^m))&\leq& 2\ds\frac{(w_K^m)^+-(w_{K,\sigma}^m)^+}{1+(w_{K}^m)^+}\\[1em]
&\leq&2\ds\frac{(w_K^m)^+-(w_{K,\sigma}^m)^+}{\sqrt{1+ (w_K^m)^+}\sqrt{1+ (w_{K,\sigma}^m)^+}}.
\end{array}
$$
Therefore,
$$
G_m\leq 2\Vert {\mathbf U}\Vert_{L^\infty} \sum_{\sigma\in\E_{int}\cup\E^D}{\rm m}(\sigma)\ds\frac{\vert(w_K^m)^+-(w_{K,\sigma}^m)^+\vert}{\sqrt{1+ (w_K^m)^+}\sqrt{1+ (w_{K,\sigma}^m)^+}}.
$$
We apply now Cauchy-Schwarz inequality in order to get 
\begin{equation}\label{ineg2}
G_m\leq 2\Vert {\mathbf U}\Vert_{L^\infty}(F_m)^{1/2} \left(\sum_{\sigma\in\E^{sp}}{\rm m}(\sigma)\dsig\right)^{1/2},
\end{equation}
where $\E^{sp}$ is the set of interior and Dirichlet boundary edges on which $(w_K^m)^+-(w_{K,\sigma}^m)^+\neq 0$. It appears that, due to \eqref{inegvol},
\begin{equation}\label{ineg3}
\sum_{\sigma\in\E^{sp}}{\rm m}(\sigma)\dsig\leq \ds\sum_{K\in\T; w_K^m>0}\left(\sum_{\sigma\in \E_{K,int}\cup \E_K^D} {\rm m}(\sigma)\dsig\right)\leq \ds\frac{p}{\xi} {\rm m} (\{{\mathbf w}^m>0\}).
\end{equation}
We deduce from \eqref{inegdep}, \eqref{ineg1}, \eqref{ineg2} and \eqref{ineg3} that 
$$
\beta_{\mathbf U} F_m\leq {2}\Vert {\mathbf U}\Vert_{L^\infty} (F_m)^{1/2}(\frac{p}{\xi}{\rm m} (\{{\mathbf w}^m>0\}))^{1/2}+{\Vert f\Vert_{L^\infty}{\rm m} (\{{\mathbf w}^m>0\})},
$$
which yields \eqref{majEm} using Young's inequality and  the bounds $E_m\leq F_m$ and $\beta_{\mathbf U}\leq1$.
\end{proof}
{Before stating the main result of the section, we need a technical lemma.
\begin{lemma}\label{lem:sequence}
Let $(u_n)_{n\in\N}$ be a sequence of non-negative real numbers and let $K, \rho>0$ and $\alpha >1$. Then if for all $n\in\N$
\[
u_{n+1}\,\leq\, K\,\rho^{n}\,u_{n}^\alpha\,,
\]
one has
\[
0\leq u_n\,\leq\, \left(u_0\,\rho^{\frac{1}{(\alpha-1)^2}}\,K^{\frac{1}{\alpha-1}}\right)^{\alpha^n}\,\rho^{-\frac{n(\alpha-1)+1}{(\alpha-1)^2}}\,K^{-\frac{1}{\alpha-1}}
\]
 for all $n\in\N$ and the bound is optimal. In particular, if $u_0\leq \rho^{-\frac{1}{(\alpha-1)^2}}\,K^{-\frac{1}{\alpha-1}}$, then $\lim u_n=0$.
\end{lemma}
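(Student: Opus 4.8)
The plan is to recognize this as a standard De Giorgi iteration estimate and to linearize the geometric recursion by passing to logarithms. Writing $y_n=\log u_n$ (and treating separately the degenerate case in which some $u_{n_0}=0$, where the recursion forces $u_n=0$ for all $n\geq n_0$ and the claimed bound holds trivially since its right-hand side is nonnegative), the hypothesis $u_{n+1}\leq K\rho^n u_n^\alpha$ becomes the affine inequality
\[
y_{n+1}\ \leq\ \alpha\,y_n+n\log\rho+\log K .
\]
Since $\alpha>0$, the map $y\mapsto \alpha y+\cdots$ is nondecreasing, so $y_n\leq z_n$ where $(z_n)$ solves the corresponding equality recursion $z_{n+1}=\alpha z_n+n\log\rho+\log K$ with $z_0=y_0$.

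Next I would solve this affine recursion in closed form. Because $\alpha\neq 1$, the inhomogeneous term (affine in $n$) admits a particular solution of the form $Pn+Q$; matching coefficients gives $P=-\log\rho/(\alpha-1)$ and $Q=-\log\rho/(\alpha-1)^2-\log K/(\alpha-1)$, while the homogeneous part is $A\alpha^n$ with $A$ fixed by $z_0=y_0$. Exponentiating $z_n$ and collecting the powers of $u_0$, $\rho$ and $K$ should reproduce exactly the asserted bound, i.e. the closed form $D_n := \big(u_0\,\rho^{1/(\alpha-1)^2}K^{1/(\alpha-1)}\big)^{\alpha^n}\,\rho^{-(n(\alpha-1)+1)/(\alpha-1)^2}\,K^{-1/(\alpha-1)}$.

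Rather than carry the logarithms through the degenerate cases, the cleanest rigorous closure is to take the closed-form $D_n$ produced above as an ansatz and verify directly, by the algebraic identity on exponents, that $D_0=u_0$ and $D_{n+1}=K\rho^n D_n^{\alpha}$. Then $u_n\leq D_n$ follows by a one-line induction: if $u_n\leq D_n$ then, using that $t\mapsto t^\alpha$ is nondecreasing on $[0,\infty)$ and $K\rho^n>0$, one gets $u_{n+1}\leq K\rho^n u_n^\alpha\leq K\rho^n D_n^\alpha=D_{n+1}$. This also yields optimality at once: the sequence defined by equality in the hypothesis satisfies $u_n=D_n$ for every $n$, so the bound cannot be improved.

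Finally, for the limit, the smallness assumption $u_0\leq \rho^{-1/(\alpha-1)^2}K^{-1/(\alpha-1)}$ is exactly the statement that the base $\theta:=u_0\,\rho^{1/(\alpha-1)^2}K^{1/(\alpha-1)}$ satisfies $\theta\leq 1$. Writing $\log D_n=\alpha^n\log\theta-\frac{n(\alpha-1)+1}{(\alpha-1)^2}\log\rho-\frac{1}{\alpha-1}\log K$, the term $\alpha^n\log\theta$ (with $\log\theta<0$ when $\theta<1$) grows like a double exponential and dominates the single term that is linear in $n$, so $\log D_n\to-\infty$ and hence $u_n\to0$. I expect the only real difficulty to be bookkeeping: correctly solving the affine recursion and simplifying the $\rho$-exponent to the stated form $-(n(\alpha-1)+1)/(\alpha-1)^2$. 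There is no conceptual obstacle here, only the risk of algebraic slips in the exponents.
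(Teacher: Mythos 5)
Your proof is correct and is essentially the paper's argument in a slightly different dressing: the paper's one-line substitution $v_n = u_n\,\rho^{\frac{n(\alpha-1)+1}{(\alpha-1)^2}}\,K^{\frac{1}{\alpha-1}}$ (which turns the hypothesis into $v_{n+1}\leq v_n^\alpha$) rests on exactly the same exponent identity as your direct verification that $D_n$ solves the equality recursion, followed by induction. The logarithmic linearization you use to \emph{find} the formula is just the derivation the paper omits, so no substantive difference.
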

\begin{proof}
 Just observe that the sequence $v_n = u_n\,\rho^{\frac{n(\alpha-1) + 1}{(\alpha-1)^2}}\,K^{\frac{1}{\alpha-1}}$ satisfies $0\leq v_{n+1}\leq v_{n}^\alpha$ for all $n\geq0$ which directly yields the result.
\end{proof}}

{ \begin{proposition}\label{mainprop}
Assume that $f_K\geq 0$ for all $K\in\T$ and $v_\sigma^D \in [0,1]$ for all $\sigma \in \E^D$, so that $v_K\geq 0$ for all $K\in\T$. Then, there exists $\eta>0$ depending only on $\Omega$, $p$ and $\xi$ such that one has the implication
\begin{equation}\label{resprop}
\ds E_1\leq\ \eta\ \frac{\beta_{\mathbf U}^4}{(\|\mathbf{U}\|_{L^\infty}^2+\|f\|_{L^\infty})^2}\quad\Rightarrow\quad (v_K\leq 2,\ \forall K\in\T)\,.
\end{equation}
\end{proposition}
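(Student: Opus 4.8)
The plan is to run a De Giorgi iteration directly on the energies $(E_m)_{m\ge1}$, feeding the fundamental estimate \eqref{majEm} into a discrete Sobolev inequality and then invoking Lemma~\ref{lem:sequence}. Throughout, I write $g_m$ for the piecewise constant function with values $g_{m,K}=\log(1+(v_K-C_m)^+)$. Since $v_\sigma^D\le 1\le C_m$ for every $m\ge1$ and every $\sigma\in\E^D$, the Dirichlet traces of $g_m$ vanish, so that $E_m$ is exactly the squared discrete $H^1$-seminorm of $g_m$ associated with a homogeneous Dirichlet condition on $\Gamma^D$. I also set $\mu_m=\sum_{v_K>C_m}\mathrm{m}(K)=\mathrm{m}(\{\mathbf{w}^m>0\})$, so that \eqref{majEm} reads $E_m\le A\,\mu_m$ with $A=\tfrac{4p}{\beta_{\mathbf U}^2}(\|\mathbf{U}\|_{L^\infty}^2+\|f\|_{L^\infty})$.

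First I would pass from one level to the next by a Chebyshev-type estimate. On any cell with $v_K>C_{m+1}$ one has $v_K-C_m>C_{m+1}-C_m=2^{-m}$, hence $g_{m,K}\ge\log(1+2^{-m})\ge 2^{-m}\log 2$; therefore $\mu_{m+1}\,(2^{-m}\log2)^q\le\|g_m\|_{L^q(\Omega)}^q$ for any exponent $q$. Next I would invoke a discrete Sobolev inequality $\|g_m\|_{L^q(\Omega)}\le C_S\,E_m^{1/2}$, valid for functions vanishing on $\Gamma^D$, with a constant $C_S$ depending only on $\Omega$, $p$ and $\xi$; the choice $q=3$ is admissible in both dimensions $p\in\{2,3\}$ (it is subcritical for $p=3$ and arbitrary for $p=2$) and is precisely the value producing the exponents in the statement. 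Combining these two bounds with $E_{m+1}\le A\,\mu_{m+1}$ yields, for all $m\ge1$,
\[
E_{m+1}\ \le\ A\,C_S^{\,3}\,(\log 2)^{-3}\,(2^{3})^{m}\,E_m^{3/2}.
\]

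With $\alpha=3/2>1$, $\rho=2^{3}$ and $K'=A\,C_S^{3}(\log2)^{-3}$ this is exactly the hypothesis of Lemma~\ref{lem:sequence} for the shifted sequence $u_n=E_{n+1}$ (so that $u_{n+1}\le K\rho^n u_n^\alpha$ with $K=\rho K'$). The lemma then guarantees $E_m\to0$ as soon as $E_1=u_0\le\rho^{-1/(\alpha-1)^2}K^{-1/(\alpha-1)}$. Since $\alpha-1=\tfrac12$, the right-hand side equals a dimensional constant times $A^{-2}$, and because $A^{-2}=\tfrac{\beta_{\mathbf U}^4}{16p^2(\|\mathbf{U}\|_{L^\infty}^2+\|f\|_{L^\infty})^2}$, this smallness condition takes exactly the form $E_1\le\eta\,\beta_{\mathbf U}^4/(\|\mathbf{U}\|_{L^\infty}^2+\|f\|_{L^\infty})^2$ with $\eta$ depending only on $\Omega$, $p$ and $\xi$ (through $C_S$ and $p$). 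Finally, $E_m\to0$ forces $v_K\le2$ for every $K$: by the discrete Poincaré inequality on $\Gamma^D$ one has $\|g_m\|_{L^2(\Omega)}^2\le C_P\,E_m\to0$, whereas any cell with $v_K>2$ would contribute the fixed positive amount $\mathrm{m}(K)\,(\log(1+v_K-2))^2$ to $\|g_m\|_{L^2(\Omega)}^2$ for all $m$, a contradiction.

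The main obstacle is the second step: setting up the discrete Sobolev inequality with a constant that genuinely depends only on $\Omega$, $p$ and $\xi$, and checking that the vanishing Dirichlet trace of $g_m$ makes it applicable. The remainder is careful bookkeeping of the exponents, whose only delicate point is confirming that $q=3$ (equivalently $\alpha=3/2$) is the choice reproducing the powers $\beta_{\mathbf U}^4$ and $(\|\mathbf{U}\|_{L^\infty}^2+\|f\|_{L^\infty})^2$ of the statement.
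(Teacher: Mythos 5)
Your proposal is correct and follows essentially the same route as the paper's proof: a Chebyshev-type estimate between consecutive truncation levels with exponent $q=3$, a discrete Poincar\'e--Sobolev inequality to obtain the recursion $E_{m+1}\lesssim A\,8^m E_m^{3/2}$, Lemma~\ref{lem:sequence} with $\alpha=3/2$ to identify the smallness threshold proportional to $A^{-2}$, and a discrete Poincar\'e inequality to conclude $v_K\leq 2$. Your explicit remark that the Dirichlet traces of $g_m$ vanish (so the Sobolev and Poincar\'e inequalities apply) is a useful detail the paper leaves implicit.
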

}
\begin{proof}
The proof consists in establishing an induction property on $E_m$ which guarantees that if $E_1$ is small enough then $\lim E_m =0$. Then, as $\lim C_m=2$ 
and thanks to the discrete Poincar\'e inequality,  we deduce  that 
$$
\ds\sum_{K\in\T} {\rm m}(K) \left( \log (1 +(v_K-2)^+)\right)^2=0,
$$
which implies $v_K\leq 2$ for all $K\in\T$.

For establishing the induction, first observe that as $C_m=C_{m-1} +2^{-m+1}$, for any $q>0$ we have:
\begin{equation}\label{eq:nonlinbound}
{\mathbf 1}_{\{{\mathbf w}^m>0\}}\leq \frac{\left(\log (1+({\mathbf w}^{m-1})^+)\right)^q}{(\log (1+2^{-m+1}))^q}{\mathbf 1}_{\{{\mathbf w}^{m-1}>0\}},
\end{equation}
and thus
$$
{\rm m} (\{{\mathbf w}^m>0\})\leq \frac{1}{(\log (1+2^{-m+1}))^q}\ds\sum_{K\in\T} {\rm m}(K) \left( \log (1 +(w_K^{m-1})^+)\right)^q.
$$
We may choose for instance $q=3$ and apply a discrete Poincar\'e-Sobolev inequality (whose constant $C_{\Omega,p}$ depends only on $\Omega$ and $p$), which leads to 
\begin{equation}\label{majmes}
{\rm m} (\{{\mathbf w}^m>0\})\leq\frac{1}{(\log (1+2^{-m+1}))^3}\frac{C(\Omega)}{\xi^{3/2}} E_{m-1}^{3/2}.
\end{equation}
Noticing that for $x\in[0,1]$, $(\log(1+x))^3\geq (\log 2)^3 x^3$, we deduce from \eqref{majEm} and \eqref{majmes} that
$$
E_m\leq \frac{4}{\beta_{\mathbf U}^2}\left(\Vert {\mathbf U}\Vert_{L^\infty}^2+\Vert f\Vert_{L^\infty}\right)\frac{{\tilde C}_{\Omega,p}}{\xi^{3/2}}8^{m-1}E_{m-1}^{3/2}.
$$
Thus the sequence $(E_m)_{m\geq 0}$ satisfies the hypothesis of Lemma~\ref{lem:sequence} with $\alpha=3/2$ and $K$ proportional to $(\Vert {\mathbf U}\Vert_{L^\infty}^2+\Vert f\Vert_{L^\infty})/\beta_{\mathbf U}^2$.  We deduce the upper bound for $E_1$ under which $\lim E_m =0$. 
\end{proof} 

{\em Remark:}  The arguments developed in this section still hold, up to minor adaptation, for $f\in L^r(\Omega)$ with $r>p/2$.

\section{Proof of Theorem \ref{mainthm}}\label{sec:proof}

First observe that if one replaces the data $f$ and $v^D$ by either $f^+$ and $(v^D)^+$, or $f^-$ and $(v^D)^-$, in the scheme \eqref{scheme}-\eqref{numflux}, then the corresponding solutions, say respectively $\mathbf{P}=(P_K)_{K\in\T}$ and $\mathbf{N}=(N_K)_{K\in\T}$, are non-negative and such that ${\mathbf v}=\mathbf{P}-\mathbf{N}$ is the solution to \eqref{scheme}-\eqref{numflux} in the original framework.

From there let us show that there is ${\overline M}>V^D_+:=\max(\|(v^D)^+\|_{L^\infty},1)$ such that for all $K\in\T$ one has $0\leq P_K\leq {\overline M}$. The bound for $\mathbf{N}$, which is denoted by ${\underline M}$, can be obtained in the same way. 

Let $M>V^D_+$. First observe that $\mathbf{P}^M:=\mathbf{P}/M$ satisfies the scheme \eqref{scheme}-\eqref{numflux} where the source term and boundary data have been replaced by $f^+/M$ and $(v^D)^+/M$ respectively. Moreover, one can apply Proposition~\ref{prop:fund_ineq}, which yields
\begin{equation}\label{eq:ineqPMagain}
 E_1(\mathbf{P}^M)\leq \frac{4p}{\beta_{\mathbf U}^2}\left(\Vert {\mathbf U}\Vert_{L^\infty}^2 + \frac{\Vert f^+\Vert_{L^\infty}}{M}\right) {\rm m} (\{{\mathbf P}^M>1\})\,.
\end{equation}
Now observe that $\mathbf{P}\,=\,M\,\mathbf{P}^M\,=\,V^D_+\,\mathbf{P}^{V^D_+}$. Therefore, 
$$
\begin{array}{rcl}
 \ds E_1(\mathbf{P}^M)&\leq& \ds \frac{4p}{\beta_{\mathbf U}^2}\Big{(}\Vert {\mathbf U}\Vert_{L^\infty}^2\,{\rm m} (\{{\mathbf P}^{ V^D_+}>M /  V^D_+\}) + \frac{\Vert f^+\Vert_{L^\infty}}{M}{\rm m}(\Omega)\Big{)} \\[1em]
 &\leq&\ds\frac{4p}{\beta_{\mathbf U}^2}\Big{(}\Vert {\mathbf U}\Vert_{L^\infty}^2 \sum_{K\in\T}{\rm m}(K)\,\frac{\log(1+(P_K^{ V^D_+}-1)^+)^2}{\log(M / V^D_+)^2} + \frac{\Vert f^+\Vert_{L^\infty}}{M}{\rm m}(\Omega)\Big{)}\\[1em]
&\leq&\ds\frac{C_{\Omega,p}}{\xi\beta_{\mathbf U}^2}\Vert {\mathbf U}\Vert_{L^\infty}^2\, \frac{E_1(\mathbf{P}^{V^D_+})}{\log(M /  V^D_+)^2} + \frac{4p\,{\rm m}(\Omega)}{\beta_{\mathbf U}^2}\frac{\Vert f^+\Vert_{L^\infty}}{M}\,\,,
\end{array}
$$
where we used an argument similar to \eqref{eq:nonlinbound} in the second inequality and a discrete Poincar\'e inequality in the third one. Then, by using \eqref{eq:ineqPMagain} again we get 
$$
 E_1(\mathbf{P}^{V^D_+})\ \leq\ \frac{4\,p\,{\rm m}(\Omega)}{\beta_{\mathbf U}^2}\left(\Vert {\mathbf U}\Vert_{L^\infty}^2 + \frac{\Vert f^+\Vert_{L^\infty}}{V^D_+}\right)
$$
Therefore, the smallness condition of Proposition~\ref{mainprop} is satisfied by $E_1(\mathbf{P}^M)$ if 
\begin{multline}\label{boundM}
 \left[\Vert{\mathbf U}\Vert_{L^\infty}^2\left(\Vert {\mathbf U}\Vert_{L^\infty}^2 + \frac{\Vert f^+\Vert_{L^\infty}}{V^D_+}\right)+ \frac{\Vert f^+\Vert_{L^\infty}}{M}\log\left(\frac{M}{V^D_+}\right)^2\right]\,\left(\Vert {\mathbf U}\Vert_{L^\infty}^2 + \frac{\Vert f^+\Vert_{L^\infty}}{M}\right)^2\\\ \leq\ \,C_{\Omega,\xi,p}\,\beta_{\mathbf U}^4\,\log\left(\frac{M}{V^D_+}\right)^2\,.
\end{multline}
It is clear that \eqref{boundM} is satisfied for $M$ large enough, which permits to define ${\overline M}$. Observe that if $v^D_+=0$ ($V_+^D=1$) and $\mathbf U=0$, ${\overline M} = \widetilde{C}_{\Omega,\xi,p}\|f^+\|_{L^\infty}$ works as expected.

 \smallskip
 
 \textbf{Acknowledgements.}  The authors thank the Labex CEMPI (ANR-11-LABX-0007-01) and the ANR MOHYCON (ANR-17-CE40-0027-01) for their support. They also want to thank Alexis F. Vasseur for fruitful exchanges on the subject.
 
\bibliographystyle{spmpsci}
\bibliography{bib-dGdiscret}

 \end{document}